\newtheorem{theorem}{Theorem}
\newtheorem{definition}{Definition}
\newtheorem{proposition}{Proposition}
\newcommand{\C}{\mathbb{C}}
\newcommand{\R}{\mathbb{R}}
\begin{document}

\title{Hermitian Clifford Analysis and Its Connections with Representation Theory}

\author{Stuart Shirrell\thanks{Electronic address:  {\tt stuart.shirrell@gmail.com} } $^{1,2}$ and Raymond Walter\thanks{Electronic address:  {\tt rwalter@email.uark.edu} } $^{3,4}$ \\
\emph{\small $^1$Yale Law School, New Haven, CT 06511} \\
\emph{\small $^2$IDinsight, Patna, Bihar, India} \\
\emph{\small $^3$Department of Mathematics, University of Arkansas, Fayetteville, AR 72701, USA} \\ 
\emph{\small $^4$Department of Physics, University of Arkansas, Fayetteville, AR 72701, USA}}

\date{}

\maketitle

\begin{abstract}
This work reconsiders the holomorphic and anti-holomorphic Dirac operators of Hermitian Clifford analysis to determine whether or not they are the natural generalization of the orthogonal Dirac operator to spaces with complex structure. We argue the generalized gradient construction of Stein and Weiss based on representation theory of Lie groups is the natural way to construct such a Dirac-type operator because applied to a Riemannian spin manifold it provides the Atiyah-Singer Dirac operator. This method, however, does not apply to these Hermitian Dirac operators because the representations of the unitary group used are not irreducible, causing problems in considering invariance under a group larger than $U(n)$. This motivates either the development of Clifford analysis over a complex vector space with respect to a Hermitian inner product or the development of Dirac-type operators on Cauchy-Riemann structures.
\\\\ The authors dedicate this work to John Ryan in celebration of his sixtieth birthday.
\end{abstract}

{\bf Keywords:}\quad Hermitian Clifford analysis, representation theory, generalized gradient, complex structure, Cauchy-Riemann structure 

%{\bf AMS Subject Classification: 11E88, 17B10, 57R15} 

\section{Introduction}

The orthogonal Dirac operator has an important interpretation as a generalized gradient operator, constructed in terms of representations of the spin group.  In particular, the orthogonal Dirac operator can be constructed by projecting the gradient onto the orthogonal complement of the Cartan product of appropriate representations of the spin group.  One can generalize this construction to a principal $Spin(n)$ bundle over an oriented Riemannian manifold, giving one a connection on the tangent bundle taking values in a spin bundle.  We thus consider this the correct way to introduce a Dirac-type operator, that is, through representation-theoretic considerations.  A recent development in the field of Clifford analysis is the introduction of two Hermitian Dirac operators, which are, essentially, the holomorphic and anti-holomorphic parts of the Dirac operator.  These operators have been introduced in a way that is similar to the representation-theoretic construction of the Dirac operator.  We show, however, that these operators are not constructed using \emph{irreducible} representations, which creates problems when trying to prove invariance of these operators under a larger group than $U(n)$. These two Hermitian Dirac operators have since been shown to lack invariance under Kelvin inversion \cite{RCK}, reaching the same conclusions, but the present work proceeds strictly using representation theory.

\section{Preliminaries on Orthogonal Clifford Analysis}

Here we review the construction of a Clifford algebra on $\R^n$ and the orthogonal Dirac operator acting on functions taking values on such a Clifford analysis. Consider a real vector space $V$ which is equipped with a non-degenerate, symmetric bilinear form $(\cdot,\cdot)$.  The Clifford algebra associated to $(V,(\cdot,\cdot))$ is defined as follows.  Set

\begin{equation} \nonumber
T(V) = \bigoplus_{i=0}^\infty V^{\otimes i}
\end{equation}
where $V^{\otimes i} = \underbrace{V \otimes V \otimes \cdots \otimes V}_\text{$i$ times}$ and $V^0 = \R$.  Let $I$ be the ideal in $T(V)$ generated by elements of the form $v\otimes w + w\otimes v + 2(v,w)$, where $v,w\in V$.  The Clifford algebra associated to $\left(V, (\cdot,\cdot) \right)$ is then defined by $Cl(V, (\cdot,\cdot))= T(V)/I$. Two examples include the complex numbers and the quaternions.
\begin{enumerate}
\item If $V=\R$ and $(\cdot,\cdot)$ is the standard inner product on $\R$, then we have the associated Clifford algebra being defined by the relations $e_1e_1 + e_1 e_1 = -2$, that is $2e_1^2 =-2$.  Now define a map $Cl(\R) \rightarrow \C$ by $e_1 \mapsto i$, and we see that $Cl(\R) \cong \C$. \\
 \item Consider $V = \R^2$ with the standard inner product.  Then the associated Clifford algebra consists of $\text{span}_\R(1, e_1, e_2, e_1e_2)$ such that $e_1^2 = e_1^2 = (e_1e_2)^2 = -1$ and $e_1e_2 = -e_2e_1$.  Note that we can define a map from this Clifford algebra to the quaternions $\mathbb{H}$ by $e_1 \mapsto \mathbf{i}, e_2 \mapsto \mathbf{j}, e_1e_2 \mapsto \mathbf{k}$.  This map is an isomorphism of division algebras.
\end{enumerate}

These examples show the Clifford algebra is a natural object to consider and moreover that the algebraic structure is intrinsically linked with the geometric structure given by the symmetric bilinear form on the vector space.  We will primarily consider the Clifford algebra associated to $\R^n$ with its standard inner product, which we denote by $Cl_n$.

We now introduce the (orthogonal) Dirac operator on $\R^n= \text{span}_\R (\mathbf{e}_1, \ldots, \mathbf{e}_n)$, where the $\mathbf{e}_i$ form an orthonormal basis.  We define

\begin{equation}
D = \sum_{i=1}^n \mathbf{e}_i \frac{\partial}{\partial x_i}. \nonumber
\end{equation}

This operator acts on functions defined on open subsets of $\R^n$ taking values in the Clifford algebra $Cl_n$.  Treating the $\frac{\partial}{\partial x_i}$ as a scalar, we note that we have the equation $D^2 = - \triangle$, where $\triangle = \sum_{i=1}^n \frac{\partial^2}{\partial x_i^2}$ is the Laplacian in Euclidean space.  This follows from the fact $\mathbf{x}^2 = - \| \mathbf{x}\|^2$ for $\mathbf{x}\in \R^n$.  We say a function $f$ is left (resp. right) monogenic if $Df= 0$ (resp. $fD=0$, where $fD = \sum_{i=1}^n \frac{\partial f}{\partial x_i} \mathbf{e}_i$).   These functions are the Clifford analysis analogues of holomorphic functions in one complex variable. Indeed, there is a well developed function theory of the orthogonal Dirac operator, involving such fundamental results as a Cauchy Theorem, Cauchy integral formula, and analyticity of monogenic functions \cite{BDS, De, P1, R, R1}.  Note, furthermore, that since $D^2 = -\triangle$, every left (or right) monogenic function is also harmonic.  Hence, orthogonal Clifford analysis not only generalizes classical complex analysis but also refines harmonic analysis on $\R^n$ \cite{G}.  

\section{Representation Theory and the Dirac Operator}

This section reviews that representation theory sufficient for the generalized gradient construction of Stein and Weiss \cite{Branson, Fegan, G, Parabolic, SW}, which derives first order differential operators from Lie group representations. We then review the construction of the orthogonal Dirac operator using representation theory for the spin group, emphasizing that this construction proceeds using \emph{irreducible} representations of the spin group, following the argument of \cite{SW}. The construction generalizes further to representations of principal bundles over oriented Riemannian manifolds and, since the Atiyah-Singer Dirac operator is so constructed on a Riemannian spin manifold, we argue the Stein and Weiss construction is the natural way to construct other Dirac-type operators \cite{G}. The next section will consider whether the Hermitian Dirac operators fit in this natural framework.

\subsection{The General Theory}%%%%%
We begin by defining our objects of interest, Lie groups and their representations.
\begin{definition} 
\text{  } 
\begin{enumerate}
\item A \emph{Lie group} is a smooth manifold $G$ that is also a group such that multiplication $(g,h)\mapsto gh:G \times G \rightarrow G$ and inversion $g \mapsto g^{-1}:G \rightarrow G$ are both smooth.
\item A \emph{representation} of a Lie group is a smooth homomorphism $\rho : G \rightarrow GL(V)$ for some real or complex vector space V.
\item A subspace $W\subset V$ for a representation $\rho$ on V is said to be \emph{invariant} if $\rho(g)W \subset W$ for all $g\in G$. 
\item An \emph{irreducible} representation has no nontrivial subspaces, that is, $\{0\}$ and $V$ are the only invariant subspaces.
\end{enumerate}
\end{definition}

We only consider finite-dimensional representations of compact Lie groups.  A basic theorem of Lie theory is that all representations of compact Lie groups are finite dimensional and that any finite-dimensional representation of a compact Lie group decomposes as a direct sum of irreducible representations \cite{knapp}.  A representation maps a Lie group into the general linear group of some vector space.  Hence, there is a well defined trace which is invariant under conjugation by invertible linear transformations; indeed, the trace only needs to be defined on the conjugacy classes of the group in question.  

\begin{definition}
The character of a representation of a Lie group $G$ is defined to be the trace of the representation, that is for a representation $\rho$ of G, the character, $\chi_\rho$, of $\rho$ is given by $\chi_\rho(g) = \text{Tr}(\rho(g))$, $g\in G$. 
\end{definition}
Every compact Lie group has a unique left-invariant measure $\mu$, known as Haar measure, such that $\mu(G)=1$ \cite{knapp}.  The characters corresponding to the finite-dimensional, unitary representation of a finite-dimensional Lie group $G$ determine the representation: they form an orthonormal basis for $\mathscr{L}^2(G,\mu)^G$, the space of square-integrable class functions (that is, functions which are constant on each conjugacy class). This is the so-called Peter-Weyl Theorem and more information on analysis on Lie groups is found in \cite{knapp}.

Given a compact Lie group $G$, we can consider the closed, connected, abelian subgroups of $G$.  Each such group is a direct product of copies of $S^1$.  A maximal abelian subgroup of $G$ is a \emph{maximal torus} of $G$ and is denoted by $T$.  The maximal torus has the form $T= \{(e^{i\theta_1},\cdots, e^{i\theta_k})|\ \theta_i \in \R \ \forall i\}$.  Since $T$ is abelian, each irreducible complex representation of $T$ must be one-dimensional, and hence has the form
\begin{equation}
\label{torus}
(e^{i\theta_1},\cdots,e^{i\theta_k})\mapsto e^{i(\theta_1 m_1+\theta_2m_2 +\cdots+\theta_k m_k)}.
\end{equation}
Given a representation $\rho$ of $G$, we may write the restriction $\rho|_T$ as a direct sum of irreducible (i.e. one-dimensional) representations as in Eq. (\ref{torus}).
\begin{definition}
\label{weight}
Given a representation $\rho$ of $G$, a weight of $\rho$ is a $k$-tuple $(m_1,\ldots,m_k)$ of integers corresponding to an irreducible representation of $T$ when $\rho$ is restricted to $T$ with the form of Eq. (\ref{torus}).
\end{definition}
The set of weights of a representation can be given a lexicographic order as \\$(m_1,\cdots,m_k) > (n_1,\cdots,n_k)$ if the first nonzero difference of $m_i - n_i$ is positive.  The largest weight with respect to this order is called the highest weight of the representation.  

Maximal tori are integral to the representation theory of Lie groups.  A maximal torus corresponds to a maximal abelian diagonalizable subalgebra $h$ (a Cartan subalgebra) of the Lie algebra $g$ corresponding to the Lie group $G$. The weights defined above are associated with the eigenvectors of the Cartan subalgebra when acting on $g$ by the representation under consideration. Since general representation theory is not our emphasis here, we refer the reader to extended discussions elsewhere, as in the latter half of \cite{F}.

Every element in a compact Lie group is conjugate to an element in a maximal torus, and any two maximal tori are themselves conjugate by an automorphism of $G$.  Hence, a character need only be defined on the maximal torus of a compact Lie group.  Note, however, that some elements of the maximal torus may be conjugate to one another.  If we let $N(T)=\{g\in G | gTg^{-1} = T\}$ denote the normalizer of $T$, then we define the Weyl group associated to the Lie group $G$ by 
\begin{equation} \nonumber
W = N(T)/T.
\end{equation}

The Weyl group of a Lie group acts on a maximal torus by conjugation.  Hence, one has an action on the set of weights by conjugation.  It is the basis of this key result. 

\begin{theorem}
If $p$ is an irreducible representation of $G$ with highest weight $\mu$, then the character is this representation is given by
\begin{equation}\nonumber
\chi_\rho(\theta) = \frac{\sum_{w\in W} \text{sgn}(w) e^{iw (\mu + \delta)\cdot \theta}}{\sum_{w\in W} \text{sgn}(w) e^{i w(\delta) \cdot \theta}}
\end{equation}
where $\theta = (\theta_1,\cdots, \theta_k)\in T$ is an element of a maximal torus, W is the Weyl group, and $\delta$ is the sum of the $k$ fundamental representations of the group.
\end{theorem}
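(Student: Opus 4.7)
The plan is to prove the Weyl character formula by combining the \emph{Weyl integration formula} with the orthogonality of irreducible characters provided by the Peter-Weyl Theorem. First, I would invoke the Weyl integration formula: for any continuous class function $f$ on $G$,
\begin{equation}\nonumber
\int_G f(g)\,d\mu(g) \;=\; \frac{1}{|W|}\int_T f(\theta)\,|\Delta(\theta)|^2\,d\theta,
\end{equation}
where $\Delta(\theta) := \sum_{w\in W}\text{sgn}(w)\,e^{i w(\delta)\cdot \theta}$ is the \emph{Weyl denominator}. This formula rests on the fact recorded above that every element of $G$ is conjugate to an element of $T$: one analyzes the conjugation map $G/T \times T \to G$ and computes its Jacobian at regular elements, which produces the density $|\Delta|^2$.

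Second, I would exploit the $W$-symmetry. Since characters are class functions, and since the Weyl group acts on $T$ by conjugation (and hence on weights), the restriction $\chi_\rho|_T$ is $W$-invariant; meanwhile $\Delta$ is $W$-alternating by construction. Consequently $\Delta \cdot \chi_\rho|_T$ is a $W$-alternating Laurent polynomial in the $e^{i\theta_j}$, and any such alternating polynomial can be written uniquely as $\sum_{w\in W}\text{sgn}(w)\,e^{iw(\lambda)\cdot \theta}$ for a strictly dominant weight $\lambda$ (terms at non-regular $\lambda$ collapse to zero).

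Third, to identify $\lambda$, I would argue by highest weights. Every weight of $\rho$ is dominated by $\mu$ in the lexicographic order defined earlier, and $\delta$ is the highest weight of $\Delta$, so the maximal weight appearing in the product $\Delta \cdot \chi_\rho$ is $\mu + \delta$, forcing $\lambda = \mu + \delta$. As a consistency check, the Peter-Weyl orthonormality relation $\|\chi_\rho\|_{L^2(G)}^2 = 1$ combined with the Weyl integration formula reduces to $\|\Delta \cdot \chi_\rho\|_{L^2(T)}^2 = |W|$, and the alternating sum $\sum_w \text{sgn}(w)\,e^{iw(\mu+\delta)\cdot\theta}$ has exactly this norm. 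Dividing both sides by $\Delta$ and extending by continuity to regular elements of $T$ then yields the stated formula on a dense open subset of $G$, hence everywhere by continuity of characters.

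The main obstacle is the Weyl integration formula itself: while conceptually clean, rigorously computing the Jacobian of the conjugation map $G/T \times T \to G$ at regular elements and identifying $|\Delta|^2$ as the correct density is the technical heart of the argument, requiring a careful root-space decomposition of the Lie algebra $\mathfrak{g}$. A secondary difficulty lies in confirming that the highest weight in $\Delta \cdot \chi_\rho$ is genuinely $\mu + \delta$ rather than being cancelled by contributions from lower weights of $\rho$ paired with lower terms of $\Delta$, but strict dominance of $\mu + \delta$ under the lexicographic ordering ensures the top term is unambiguous.
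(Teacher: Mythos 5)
The paper does not actually prove this theorem: it states the Weyl character formula without argument and refers the reader to Bourbaki and Knapp, so there is no ``paper proof'' to compare against. Your sketch is the classical Weyl proof via the integration formula, and its skeleton is sound: the density $|\Delta|^2$ in the integration formula, the observation that $\Delta\cdot\chi_\rho|_T$ is $W$-alternating, the $L^2$ computation, and the highest-weight identification of $\mu+\delta$ are exactly the right ingredients. One logical point deserves reordering, though. You assert in your second step that any $W$-alternating Laurent polynomial ``can be written uniquely as $\sum_{w}\mathrm{sgn}(w)e^{iw(\lambda)\cdot\theta}$ for a strictly dominant weight $\lambda$''; that is false as a general statement --- such a polynomial is only a $\mathbb{Z}$-linear combination $\sum_\lambda c_\lambda A_\lambda$ of the elementary alternating sums $A_\lambda$. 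The fact that $\Delta\cdot\chi_\rho$ reduces to a \emph{single} such term with coefficient $\pm 1$ is precisely what the norm computation $\sum_\lambda c_\lambda^2 = \|\chi_\rho\|_{L^2(G)}^2 = 1$ delivers, so that step is the decisive pivot of the proof rather than the ``consistency check'' you label it as; the highest-weight argument then only needs to identify which $\lambda$ survives and fix the sign. You are also right that the Jacobian computation behind the integration formula (via the root-space decomposition of $\mathfrak{g}$ and the Weyl denominator identity $\Delta = \prod_{\alpha>0}(e^{i\alpha\cdot\theta/2}-e^{-i\alpha\cdot\theta/2})$) is the genuine technical burden; for the spin groups relevant to this paper one must additionally work on the double cover $\tilde{T}$ of the maximal torus so that the half-integral weight $\delta$ gives a well-defined exponential.
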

For more information on the general theory of representations of Lie groups (and their associated Lie algebras), see, for example, \cite{Bour,knapp}. We now give a method for obtaining differential operators from representations of Lie groups, which is originally found in \cite{SW}. We state this general construction as a theorem. %Bourbaki, the classical groups, etc...

\begin{theorem}
%\begin{TGC}
Suppose $\rho_1: G\rightarrow GL(U)$ and $\rho_2:G\rightarrow GL(V)$ are irreducible representations of the Lie group $G$.  Suppose further that $U$ and $V$ are inner product spaces.  Suppose we have a function $f:U\rightarrow V$ which has continuous derivative.  Taking the gradient of the function $f$, we have
\begin{equation} \nonumber
\nabla f \in \text{Hom}(U,V) \cong U^* \otimes V,
\end{equation}
where $V^*$ is the dual vector space of $V$.  Using the metric on $V$ we identify $V$ with its dual by $v\mapsto (\cdot, v)$.  This gives an isomorphism $U\otimes V^* \rightarrow U\otimes V$, and so we may consider the gradient of $f$ to be in a map from $U$ into $U\otimes V$, that is,
\begin{equation} \nonumber
\nabla f \in \text{Hom}(U,V) \cong U^* \otimes V \cong U \otimes V.
\end{equation}

Denote by $U \circledcirc V$ the irreducible representation of $U\otimes V$ of highest weight.  This is known as the Cartan product of $\rho_1$ and $\rho_2$ \cite{ME}.  For each Lie group $G$, there exists a set of irreducible representations of $G$, known as the fundamental representations, such that an irreducible representation of $G$ can be written as a repeated Cartan product of the fundamental representations.  Using the inner products on $U$ and $V$, we may write
\begin{equation}
\nonumber
U\otimes V = \left (U \circledcirc V\right) \oplus \left(U \circledcirc V \right)^\perp.
\end{equation}

If we denote by $E$ and $E^\perp$ the orthogonal projections onto $U\circledcirc V$ and $(U\circledcirc V)^\perp$, then we define a differential operator $D$ associated to $\rho_1$ and $\rho_2$ by 
\begin{equation}
\label{gen_grad}
D = E^\perp \nabla.
\end{equation}

We obtain a series of equations by considering the system
\begin{equation}
\nonumber Df = 0.
\end{equation}
\end{theorem}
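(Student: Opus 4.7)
The plan is to verify each ingredient of the construction in turn, confirming the identifications, the existence and uniqueness of the Cartan product, and the $G$-equivariance of the orthogonal projection, so that $D = E^\perp \nabla$ is a well-defined first-order differential operator.

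First, I would make the chain of isomorphisms canonical and $G$-equivariant. At each point the derivative of $f:U\to V$ is a linear map, so $\nabla f$ takes values in $\text{Hom}(U,V) \cong U^* \otimes V$. The inner product on $U$ provides the identification $U^* \cong U$ via $\alpha \mapsto u_\alpha$ with $(u_\alpha,\cdot) = \alpha$; tensoring with the identity on $V$ gives an isomorphism $U^* \otimes V \cong U \otimes V$. Because $G$ is compact, averaging against the Haar measure lets us take both inner products to be $G$-invariant, so these isomorphisms intertwine the $G$-actions.

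Next, I would construct the Cartan product. Under $\rho_1 \otimes \rho_2$, the space $U \otimes V$ is a finite-dimensional representation of the compact group $G$, hence splits into irreducible summands by complete reducibility. Let $\mu$ and $\nu$ denote the highest weights of $U$ and $V$, with highest-weight vectors $u_\mu$ and $v_\nu$. Then $u_\mu \otimes v_\nu$ is a weight vector of weight $\mu + \nu$, which lexicographically dominates every other weight appearing in $U \otimes V$, since each such weight is a sum of one weight from $U$ and one from $V$. Standard highest-weight theory then identifies $U \circledcirc V$ as the irreducible subrepresentation generated by $u_\mu \otimes v_\nu$, with highest weight $\mu + \nu$. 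With this in place I would assemble the orthogonal decomposition: the $G$-invariant Hermitian inner products on $U$ and $V$ combine to give a $G$-invariant inner product on $U \otimes V$, and since the $G$-action is unitary, Schur's lemma implies distinct isotypic summands are mutually orthogonal. Hence $(U \circledcirc V)^\perp$ is itself a $G$-invariant subspace, the projections $E$ and $E^\perp$ are $G$-equivariant, and $D = E^\perp \nabla$ is a first-order linear differential operator whose vanishing decomposes into one scalar equation per basis vector of $(U \circledcirc V)^\perp$.

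The hard part will be establishing that the highest weight $\mu + \nu$ appears in $U \otimes V$ with multiplicity exactly one, so that the Cartan product is unambiguously defined as a single irreducible summand rather than an isotypic block. While the existence of a component of this highest weight is immediate from $u_\mu \otimes v_\nu$ being a weight vector killed by every positive root, ruling out additional copies requires a weight-multiplicity argument invoking the Weyl character formula or equivalent structural facts about tensor products of irreducibles. Once that multiplicity-one fact is secured, the remainder of the verification reduces to routine $G$-equivariance checks.
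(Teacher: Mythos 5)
The paper gives no proof of this statement at all: it is the Stein--Weiss construction recorded in theorem form, with the underlying representation-theoretic facts taken on faith (and attributed to \cite{SW}). Your verification is correct and supplies exactly the facts the paper leaves implicit: $G$-invariance of the inner products by Haar averaging, complete reducibility of $U\otimes V$, orthogonality of distinct isotypic components under a unitary action, and hence $G$-equivariance of $E^\perp$. Two remarks. First, the step you single out as ``the hard part'' --- multiplicity one of the highest weight $\mu+\nu$ in $U\otimes V$ --- is more elementary than you suggest and needs no Weyl character formula: every weight of $U\otimes V$ is $\lambda_1+\lambda_2$ with $\lambda_1$ a weight of $U$ and $\lambda_2$ a weight of $V$, and $\lambda_1+\lambda_2=\mu+\nu$ with $\lambda_1\leq\mu$, $\lambda_2\leq\nu$ forces $\lambda_1=\mu$, $\lambda_2=\nu$; since the highest weight spaces of the irreducibles $U$ and $V$ are one-dimensional, the $(\mu+\nu)$-weight space of $U\otimes V$ is spanned by $u_\mu\otimes v_\nu$, so the corresponding irreducible occurs exactly once. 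Second, you silently (and correctly) repair a slip in the statement itself: to pass from $U^*\otimes V$ to $U\otimes V$ one must use the inner product on $U$ to identify $U^*$ with $U$, whereas the text speaks of identifying $V$ with $V^*$, which does not match the displayed chain of isomorphisms. The only hypothesis worth flagging is that the highest-weight machinery requires $G$ compact and connected; the paper assumes this in the surrounding discussion, and your appeal to Haar measure makes the same assumption explicit.
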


It is crucial to note this construction is in no way limited to functions defined on vector spaces.  Indeed, we may transfer this construction to a general Riemannian manifold that also has the structure of a principal $G$-bundle. Consider a manifold $M$.  A principal $G$-bundle over $M$ is a manifold $P$ together with a projection $p: P\rightarrow M$ such that $p^{-1}(x)$ is diffeomorphic to $G$ for all $x\in M$.  If $\pi_1(M)$ denotes the fundamental group of the manifold, then the universal covering space of the manifold is a principal $\pi_1(M)$-bundle.  A concrete example is the circle being covered by the line, in which case $p^{-1}(2\pi i \theta) = \{\theta +k \ | \ \theta \in [0,1), \ k\in\mathbb{Z}\}$ and diffeomorphism $p^{-1} (\theta) \rightarrow \mathbb{Z}$ is given by $x \mapsto x-\theta$.

Now if $P$ is a principal $G$-bundle over $M$, and $\rho_1$ and $\rho_2$ are representations of $G$ on $U$ and $V$, then we form the vector bundles $\mathscr{E}=P \times_{\rho_1} U$ by taking the product $P\times U$ and dividing out by the equivalence relation $(p\cdot g, v) ~(p, \rho_1(g)v)$.  Then we have two vector bundles $\mathscr{E}$ and $\mathscr{F}$ over $M$ corresponding to $\rho_1$ and $\rho_2$, and we form the tensor product $\mathscr{E} \otimes \mathscr{F}$ of these bundles.  Then the projection onto the subspace defined by the sub-vector bundle $U\circledcirc V$ (the Cartan product of $U$ and $V$) is known as a connection.

\subsection{The Spin Groups and Their Representations}
The special orthogonal group on $\R^n$, $SO(n)$, is the group of inner-product preserving linear transformations of determinant 1. It is well-known that $\pi_1(SO(n))=\mathbb{Z}_2$ for $n\geq 3$ \cite{AH}, so $SO(n)$ has a unique connected double covering group for $n\geq 3$.  We denote this group by $Spin(n)$. There is a concrete realization of $Spin(n)$ using the Clifford algebra $Cl_n$.  To this end, recall $vw +wv = -2(v,w)$ for $v,w \in \R^n$.  Hence, if $\|v\|=1$, we have that $vwv^{-1} = w-2(v,w)v$, which is precisely the formula for the reflection of $w$ across the 1-dimensional subspace defined by $v$.  By the Cartan-Diuedonne Theorem, $SO(n)$ is generated by even products of such reflections.  Hence, we define
\begin{equation} \nonumber
Spin(n) = \{v_1 v_2 \cdots v_{2k}\ |\  v_i \in S^{n-1} \ \forall i\}
\end{equation}

Writing $n=2k$ or $n=2k+1$ for when $n$ is even or odd, respectively we see that the maximal torus in $SO(n)$ is given by matrices of the form 
\begin{equation} \nonumber
\left(\begin{array}{cc}\text{cos} \theta_j & -\text{sin} \theta_j \\\text{sin}\theta_j & \text{cos} \theta_j\end{array}\right)
\end{equation}  
placed along the diagonal.  We denote by $T$ the preimage of the maximal torus of $SO(n)$ under the projection $Spin(n) \rightarrow SO(n)$.  The weights corresponding to $T$ are of the form $(m_1,\ldots, m_k)$ with all the $m_j$'s integral or half-integral.  The weights corresponding to the irreducible representations of the spin group may have half-integral values because it is a double cover of the special orthogonal group.  More precisely, if $T$ is a maximal torus in $SO(n)$ and $\pi:Spin(n) \rightarrow SO(n)$ is the projection map, then set $\tilde{T} = \pi^{-1}(T)$.  Then $T$ is an abelian group which is a connected double cover of $T$.  Hence, each irreducible representation of $\tilde{T}$ contributes "one half" an irreducible representation of $T$.  

The weights corresponding to the fundamental representations of $Spin(n)$ are \\ $(1,0,\cdots, 0)$, $(1,1, 0, \cdots, 0)$, $\cdots$, $(1,1,\cdots, 1, 0)$, and $(\frac{1}{2}, \frac{1}{2}, \cdots, \frac{1}{2})$ when $n=2k+1$, and $(1,0,\cdots, 0)$, $(1,1, 0, \cdots, 0)$, $\cdots$, $(1,1,\cdots, 1, 0,0)$, $(\frac{1}{2}, \frac{1}{2}, \cdots, \frac{1}{2})$, and $(\frac{1}{2}, \frac{1}{2}, \cdots, \frac{1}{2},-\frac{1}{2})$ when $n=2k$.  
The half-integral weight representations are known as the spin representations and are the only representations which are not lifted from a representation of $SO(n)$.  The representations of highest weight $(1,1,\cdots,1,0,\cdots, 0)$ with 1 repeated $r$ times are lifts of representations of $SO(n)$ and arise from the canonical representation of $SO(n)$ tensored with itself $r$ times, which are then restricted on the space of $r$-antisymmetric tensors.  For more information, see \cite{We}.  The spin representations can be realized by considering a minimal left ideal in the Clifford algebra $Cl_n$.  The spin group acts on this space by multiplication on the left.  This representation is irreducible if $n$ is odd and breaks up into two non-isomorphic representations that are dual to each other if $n$ is even.  

\subsection{The Orthogonal Dirac Operator as a Generalized Gradient}

We show the orthogonal Dirac operator is realizable by representations of the spin group.  

\begin{theorem}

Let $\rho_1$ be the representation of the spin group given by the standard representation of $SO(n)$ on $\R^n$
\begin{equation}\nonumber
 \rho_1:Spin(n) \rightarrow SO(n) \rightarrow GL(\R^n)
\end{equation}
and let $\rho_2$ be the spin representation on a minimal left ideal of $Cl_n$, which we denote $\mathbb{S}$.  Then the orthogonal Dirac operator is the differential operator given in the generalized gradient construction by $\R^n \circledcirc \mathbb{S}$ when $n=2k+1$ and by $\R^n \circledcirc \mathbb{S}^+$ when $n=2k$ and where $\mathbb{S}^+$ is the spin representation of weight $\left(\frac{1}{2},\frac{1}{2},\cdots,\frac{1}{2}\right)$.
\end{theorem}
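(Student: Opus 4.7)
The plan is to identify the operator $D = E^\perp \nabla$ produced by the generalized gradient construction applied to $(\rho_1,\rho_2)$ with the Dirac operator $\sum_i \mathbf{e}_i \partial_i$ by showing that the orthogonal complement $(\R^n \circledcirc \mathbb{S})^\perp$ inside $\R^n \otimes \mathbb{S}$ (respectively $\R^n \otimes \mathbb{S}^+$) is the irreducible $Spin(n)$-submodule $\mathbb{S}$ (respectively $\mathbb{S}^-$), realized as the image of a canonical section of Clifford multiplication, and that the projection $E^\perp$ is a nonzero scalar multiple of Clifford multiplication itself. Once this is established, applying $E^\perp$ to $\nabla f$, which under the inner-product identifications becomes $\sum_{i=1}^n \mathbf{e}_i \otimes \partial_i f \in \R^n \otimes \mathbb{S}$, yields (up to a nonzero constant) $\sum_{i=1}^n \mathbf{e}_i \cdot \partial_i f = Df$, so the system $E^\perp \nabla f = 0$ is exactly the Dirac equation.

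First I would record two $Spin(n)$-equivariant maps: Clifford multiplication $\mu(v \otimes s) := v \cdot s$ and the section $\sigma(s) := \sum_i \mathbf{e}_i \otimes \mathbf{e}_i s$. Equivariance of $\mu$ is immediate from $g(v \cdot s) = (gvg^{-1}) \cdot (gs)$ for $g \in Spin(n) \subset Cl_n$, together with the fact that conjugation on $\R^n$ realizes $\rho_1$. Equivariance of $\sigma$ reduces to the identity $\sum_i (g\mathbf{e}_i g^{-1}) \otimes (g\mathbf{e}_i g^{-1}) = \sum_i \mathbf{e}_i \otimes \mathbf{e}_i$, which follows from orthogonality of the matrix representing $g$ on $\R^n$. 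A direct computation using $\mathbf{e}_i^2 = -1$ then gives $\mu \circ \sigma = -n\,\mathrm{Id}$, producing a $Spin(n)$-invariant decomposition $\R^n \otimes \mathbb{S} = \sigma(\mathbb{S}') \oplus \ker \mu$, where $\mathbb{S}' = \mathbb{S}$ when $n$ is odd and $\mathbb{S}' = \mathbb{S}^-$ when $n$ is even (the chirality swap is forced because each $\mathbf{e}_i$ exchanges $\mathbb{S}^+$ and $\mathbb{S}^-$).

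The crux is to identify $\ker \mu$ with the Cartan product $\R^n \circledcirc \mathbb{S}$. The sum of the highest weights of $\rho_1$ and $\rho_2$ is $(1,0,\dots,0) + (\tfrac12,\dots,\tfrac12) = (\tfrac32,\tfrac12,\dots,\tfrac12)$, and one exhibits a vector of this weight in $\ker \mu$ (by pairing a highest weight vector of $\mathbb{S}$ with $\mathbf{e}_1$ and subtracting the appropriate multiple of $\sigma$), which generates an irreducible submodule of the correct highest weight. A Weyl character computation, using the character formula recalled earlier, confirms that $\R^n \otimes \mathbb{S}$ has exactly two irreducible constituents, namely $\mathbb{S}'$ and the irreducible of highest weight $(\tfrac32,\tfrac12,\dots,\tfrac12)$; hence $\ker \mu$, being a $Spin(n)$-invariant complement of $\sigma(\mathbb{S}')$ in the tensor product, must coincide with $\R^n \circledcirc \mathbb{S}$.

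Finally, both $E^\perp$ and $-\tfrac{1}{n}\mu$ are $Spin(n)$-equivariant maps $\R^n \otimes \mathbb{S} \to \mathbb{S}'$ that vanish on $\R^n \circledcirc \mathbb{S}$ and agree on $\sigma(\mathbb{S}')$ (both restricting to the identity under the identification $\sigma : \mathbb{S}' \xrightarrow{\sim} \sigma(\mathbb{S}')$, by $\mu \circ \sigma = -n\,\mathrm{Id}$), so $E^\perp = -\tfrac{1}{n}\mu$ and thus $E^\perp \nabla f = -\tfrac{1}{n}Df$. I expect the main obstacle to be verifying that the tensor product decomposes into only two irreducibles: while the highest weight vector is easy to write down, ruling out other constituents requires either the explicit character calculation or appeal to a known branching rule for tensoring a fundamental representation with a spin representation, and in the even-dimensional case one must carefully track chirality to conclude that $\mathbb{S}^-$ rather than $\mathbb{S}^+$ is the complementary summand.
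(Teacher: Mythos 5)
Your proposal is correct and follows essentially the same route as the paper's (Stein--Weiss) argument: your section $\sigma(s)=\sum_i \mathbf{e}_i\otimes\mathbf{e}_i s$ is the paper's map $\eta$ up to normalization, the decomposition $\R^n\otimes\mathbb{S}=(\R^n\circledcirc\mathbb{S})\oplus\eta(\mathbb{S})$ is established by the same Weyl character computation, and your identification $E^\perp=-\tfrac{1}{n}\mu$ is just a more explicit phrasing of the paper's observation that orthogonality of $\nabla u$ to $\eta(\mathbb{S})$ (using $e_j^*=-e_j$) is exactly the Dirac equation.
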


\begin{proof} The theorem is proved in \cite{SW}, and we only outline the proof. One shows the kernel of the Dirac operator and kernel of the operator $D = E^\perp \nabla$ in Eq. (\ref{gen_grad}) for $E^\perp$ the orthogonal projection onto $(\R^n \circledcirc \mathbb{S})^\perp$ are the same, that is, these families of differential equations are equivalent. We consider the odd case $n=2k+1$. The even case $n=2k$ is similar, taking into account the character of the canonical representation is $\chi_\rho(\theta) =2 \sum_{j=1}^k \text{cos}(\theta_j)$ (not $\chi_\rho(\theta) = 2\sum_{j=1}^k \text{cos}(\theta_j)+1$ as in the odd case) and the fact the spin representation breaks up into two inequivalent, conjugate representations $\mathbb{S}=\mathbb{S}^+ \oplus \mathbb{S}^-$.

%\begin{equation}\nonumber
%\sum_{i=1}^n e_i \frac{\partial f}{\partial x_i} = 0
%\end{equation}

%is equivalent with the system 
%\begin{equation}\nonumber
%Df = E^\perp \nabla f =0
%\end{equation}
%where $E^\perp$ is the orthogonal projection onto $(\R^n \circledcirc \mathbb{S})^\perp$.  

%We shall first consider the odd case, $n=2k+1$.

In the odd case, the spin representation has highest weight $\left( \frac{1}{2}, \cdots, \frac{1}{2}\right)$.  Hence the Cartan product of the canonical representation and the spin representation has highest weight $\left(\frac{3}{2},\frac{1}{2},\cdots,\frac{1}{2}\right)$.  One defines an isomophism from $\mathbb{S}$ into $\R^n \otimes \mathbb{S}$ by
\begin{equation}\nonumber \eta(w) = \frac{1}{\sqrt{n}}(e_1w, \cdots, e_n w).
\end{equation}
One proceeds to show $\R^n \otimes \mathbb{S}= \left(\R^n \circledcirc \mathbb{S}\right)\oplus \eta(\mathbb{S})$. If this decomposition holds, then a solution $u$ to $Du = E^\perp \nabla u=0$ is $\mathbb{S}\text{-valued}$ and $\nabla u$ is $\R^n \otimes \mathbb{S}\text{-valued}$; moreover, $\nabla u$ is orthogonal to every element of $\eta(\mathbb{S})$. Noting, however, that as an endomorphism of $\R^n\otimes \mathbb{S}$, one has $e_j = - e_j^*$. Then one obtains.
\begin{equation}\nonumber
\sum_{i=1}^n \left(e_j \frac{\partial u}{\partial e_j},w\right)=0, \ \forall w\in \mathbb{S}, 
\end{equation}
 which says precisely that $u$ must be in the kernel of the orthogonal Dirac operator.
 
% We claim that $\R^n \otimes \mathbb{S}= \left(\R^n \circledcirc \mathbb{S}\right)\oplus \eta(\mathbb{S})$.  Now since $\frac{1}{n} \sum_{j=1}^n |e_j w|^2 = \frac{1}{n} \sum_{i=1}^n|w|^2 = |w|^2$, this is a unitary representation of $\mathbb{S}$.  Moreover, we have that
% \begin{equation}\nonumber \eta(\rho_2(g) w) = (\rho_1\otimes\rho_2)(g)(\eta(w)), \ w\in \mathbb{S}.
% \end{equation}
%Assume for a minute that $\R^n \otimes \mathbb{S}= \left(\R^n \circledcirc \mathbb{S}\right)\oplus \eta(\mathbb{S})$.  Now consider the equation $Du=0$, where $u$ has values in $\mathbb{S}$.  So $\nabla u$ has values in $\R^n \otimes \mathbb{S}$, and so the condition $Du = 0$ is equivalent with $\nabla u$ being orthogonal to $\eta(\mathbb{S})$.  This is precisely the statement that 
%\begin{equation}\nonumber
%\sum_{i=1}^n \left(\frac{\partial u}{\partial x_j}, e_j w\right) =0, \forall w \in \mathbb{S}.
%\end{equation}
%Notice, however, that as an endomorphism of $\R^n\otimes \mathbb{S}$, we have $e_j = - e_j^*$, hence the equations above become
%\begin{equation}\nonumber
%\sum_{i=1}^n \left(e_j \frac{\partial u}{\partial e_j},w\right)=0, \ \forall w\in \mathbb{S},
%\end{equation}
 %which says precisely that $u$ must be in the kernel of the orthogonal Dirac operator!
 
 The proof of the decomposition $\R^n \otimes \mathbb{S} = (\R^n \circledcirc \mathbb{S}) \oplus \eta(\mathbb{S})$ proceeds using character theory. Denoting by $\chi_\rho$ the character of the canonical representation and by $\chi_\mathbb{S}$ the character of the spin representation, we express the character of the representation on $\R^n \otimes \mathbb{S}$ as $\chi_\rho(\theta)\chi_\mathbb{S}(\theta)$. Decomposing this representation into irreducible representations and denoting by $\chi_m(\theta)$ the character of the irreducible representation of highest weight $m = (m_1, \cdots, m_k)$, we obtain
\begin{equation}\nonumber
\chi_\rho(\theta) \chi_\mathbb{S}(\theta) = \sum_m N(m)\chi_m(\theta).
\end{equation}
 Expanding the various $\chi_m$ using the Weyl character formula, we obtain
\begin{IEEEeqnarray}{C}
\nonumber \left(\sum_\sigma \text{sign}(\sigma)e^{i\sigma(\delta + (\frac{1}{2},\cdots,\frac{1}{2}))\cdot \theta}\right)\left(\sum_{j=1}^k e^{\pm i \theta_j}+1\right) = \sum_m N(m) \sum_\sigma \text{sign}(\sigma)e^{i\sigma(m+\delta) \cdot \theta},
\end{IEEEeqnarray}
where $\sigma$ runs over the Weyl group and $\delta = (k-\frac{1}{2}, k-\frac{3}{2}, \cdots, \frac{1}{2})$. Considering this equation, we see that any representation of strictly dominant highest weight must arise as either $m=\left(\frac{3}{2},\frac{1}{2},\cdots, \frac{1}{2}\right)$ or $m=(\frac{1}{2},\cdots,\frac{1}{2})$.  But the representation of highest weight $\left(\frac{3}{2},\frac{1}{2},\cdots, \frac{1}{2}\right)$ is the Cartan product of $\R^n$ and $\mathbb{S}$.  Hence, we have the desired decomposition.

%Choosing an explicit form of the maximal torus $T$ in $SO(n)$, we notice that $\chi_\rho(\theta) = 2\sum_{j=1}^k \text{cos}(\theta_j)+1$.  
%Hence, using the Weyl character formula, that is an irreducible character of weight $m$ takes may be written as 
%\begin{equation}\nonumber
%\chi_m(\theta) = \frac{\sum_\sigma \text{sign}(\sigma) e^{i\sigma(m+\delta)\cdot \theta}}{\sum_\sigma \text{sign}(\sigma) e^{i \sigma(\delta)\cdot \theta}}
%\end{equation}
 %Hence, we have
%\begin{IEEEeqnarray}{C}
%\nonumber \left(\sum_\sigma \text{sign}(\sigma)e^{i\sigma(\delta + (\frac{1}{2},\cdots,\frac{1}{2}))\cdot \theta}\right)\left(\sum_{j=1}^k e^{\pm i \theta_j}+1\right) = \sum_m N(m) \sum_\sigma \text{sign}(\sigma)e^{i\sigma(m+\delta) \cdot \theta}.
%\end{IEEEeqnarray}
\end{proof}

The transfer of the generalized gradient construction to a general Riemannian manifold that also has the structure of a principal $G$-bundle applies here, too. In the case $M$ is an oriented Riemannian manifold which has a principal $Spin(n)$-structure such that the projection onto $SO(n)$ on each fiber gives the $SO(n)$-structure associated to the Riemannian metric, then we may compose the total covariant derivative of the metric with the projection onto $TM \circledcirc \mathbb{S}M$, where $TM$ denotes the tangent bundle and $\mathbb{S}M$ denotes the spin bundle associated to the $Spin(n)$ structure. The resulting operator is the Atiyah-Singer Dirac operator on a Riemannian spin manifold \cite{ER}.  As this is the natural generalization of the orthogonal Dirac operator to a spin manifold, we argue this is the natural way to construct other Dirac-type operators as well.

\section{Hermitian Clifford Analysis}
A recent development in Clifford analysis is Hermitian Clifford analysis \cite{HCA1, HCA2}.  This branch of Clifford analysis attempts to refine standard orthogonal Clifford analysis by splitting the orthogonal Dirac operator into holomorphic and anti-holomorphic parts, each of which is invariant under an action of the unitary group.  While the original papers describing these operators claim they are generalized gradients in the sense of Stein and Weiss \cite{SW}, after reviewing the construction of these operators, we argue these operators are not natural from the point of view of representation theory, unlike the orthogonal Dirac operator.

\subsection{Complex structure and Clifford Algebras}
Let $V$ be an inner product space, $\text{dim } V_\R = 2n$.  Choose $J\in SO(V_\R)$ such that $J^2=-I$.  We will use the usual identification $SO(V_\R)\cong SO(2n)$.  Choose $J$ to be, say,
\begin{equation}\nonumber
\left(\begin{array}{cc}0 & -I \\I & 0\end{array}\right)
\end{equation}
where each block is an $n\times n$ matrix. 
Now the operators $\frac{1}{2}(I\pm i J)$ are mutually annihilating idempotents (i.e., projection operators) on $V_\C = V_\R \otimes_\R \C$.  Then we have the direct sum decomposition
\begin{equation}\nonumber
V_\C = W^+ \oplus W^-,
\end{equation}
where $W^\pm$ is the $\mp i $ eigenspace of $J$ acting on $V_\C$.  If we denote by $\rho$ the action of $SO(2n)$ on $V_\R$, then we may define an action on $V_\C$ by $\rho\otimes1$.  Then $g\in SO(2n)$ leaves $W^\pm$ invariant if and only if $gJ=Jg$.  Define the group $SO_J(2n)$ by 
\begin{equation} \nonumber
SO_J(2n) = \{g\in SO(2n) | gJ=Jg\}.
\end{equation}

\begin{proposition}
$SO_J(2n) \cong U(n)$.
\end{proposition}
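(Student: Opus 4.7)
The plan is to exploit the fact that $J^2=-I$ endows $V_\R\cong\R^{2n}$ with the structure of a complex vector space of dimension $n$, identify the condition $gJ=Jg$ with $\C$-linearity, and then observe that orthogonality plus $\C$-linearity is precisely the defining condition of a unitary transformation. The determinant condition will turn out to be automatic. I would carry this out in two parallel ways, a conceptual one and an explicit matrix one, and I would present the matrix one because it makes each step computationally transparent.

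First, I would show that $g\in M_{2n}(\R)$ satisfies $gJ=Jg$ if and only if $g$ has the block form
\begin{equation}\nonumber
g=\left(\begin{array}{cc} A & -B \\ B & A \end{array}\right)
\end{equation}
for some $A,B\in M_n(\R)$; this is an immediate computation using $J=\left(\begin{smallmatrix}0&-I\\I&0\end{smallmatrix}\right)$. I would then define $\Phi:U(n)\to SO_J(2n)$ by $\Phi(A+iB)=\left(\begin{smallmatrix}A&-B\\B&A\end{smallmatrix}\right)$, where $A=\mathrm{Re}(A+iB)$ and $B=\mathrm{Im}(A+iB)$. A direct block-matrix calculation shows that $\Phi$ respects multiplication, because the blocks of the product of two such matrices reproduce exactly the real and imaginary parts of the product of the corresponding complex matrices. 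Injectivity of $\Phi$ is obvious from the block decomposition, and surjectivity onto the set of real matrices commuting with $J$ follows from the first step.

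Next I would verify that $\Phi(A+iB)$ lies in $O(2n)$ if and only if $A+iB\in U(n)$. Writing out $\Phi(A+iB)^T\Phi(A+iB)=I_{2n}$ in block form yields the two conditions $A^TA+B^TB=I_n$ and $A^TB-B^TA=0$, which together are exactly $(A+iB)^*(A+iB)=I_n$. At this point I would still need to promote membership in $O(2n)$ to membership in $SO(2n)$. The key observation is that for any real $\C$-linear endomorphism one has $\det_\R \Phi(A+iB)=|\det_\C(A+iB)|^2$, so on the one hand $\det_\R \Phi(A+iB)\geq 0$, and on the other hand, if $A+iB$ is unitary, then $|\det_\C(A+iB)|=1$. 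Hence the determinant condition is automatically satisfied and $\Phi$ actually maps into $SO_J(2n)$.

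The main obstacle, as far as there is one, is this determinant point: it would be easy to worry that $SO_J(2n)$ is only an index-two subgroup of $O_J(2n)$ and therefore corresponds only to $SU(n)$ or to some component of $U(n)$. The resolution, as above, is the identity $\det_\R=|\det_\C|^2$ for $\C$-linear maps, which forces the real determinant to be non-negative and hence equal to $+1$ whenever the map is orthogonal. Once this is in hand, $\Phi$ is a smooth bijective group homomorphism, its inverse is smooth because the blocks $A,B$ depend smoothly on $g$, and the isomorphism $SO_J(2n)\cong U(n)$ is established.
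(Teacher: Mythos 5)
Your proof is correct and takes essentially the same route as the paper: derive the block form $\bigl(\begin{smallmatrix} A & -B \\ B & A \end{smallmatrix}\bigr)$ from the commutation relation $gJ=Jg$ and then match the orthogonality relations with the unitarity relations under the correspondence $g \leftrightarrow A+iB$. You are in fact somewhat more thorough than the paper's own argument, which checks neither the homomorphism property nor the determinant condition; your observation that $\det_{\R} g = |\det_{\C}(A+iB)|^{2} \geq 0$, so that orthogonality plus $\C$-linearity automatically forces $\det g = +1$, addresses a point the published proof passes over in silence.
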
 
\begin{proof}
The relation $gJ = Jg$ gives 
\begin{eqnarray}\nonumber
\left(\begin{array}{cc}0 & -I \\I & 0\end{array}\right)\left(\begin{array}{cc}A & B \\C & D\end{array}\right)\left(\begin{array}{cc}0 & I \\-I & 0\end{array}\right) = \left(\begin{array}{cc}A & B \\C & D\end{array}\right)\\
\left(\begin{array}{cc}D & -C \\-B & A\end{array}\right) = \left(\begin{array}{cc}A & B \\C & D\end{array}\right)\nonumber
\end{eqnarray}
So $A=D$ and $B=-C$.  The relation $gg^\top=I$ gives 
\begin{eqnarray}\nonumber
AA^\top+BB^\top = I \\
AB^\top - BA^\top = 0 \nonumber
\end{eqnarray}
We claim the map $\phi: SO_J(2n) \rightarrow U(n)$, $\phi\left( \left(\begin{array}{cc}A & B \\-B & A\end{array}\right)\right) = A+iB$ is an isomorphism.  This map is clearly injective, as $\phi(g) = 0$ imples $A=0 \text{ and } B=0$.  Now suppose $u\in U(n)$.  Then set $A = \frac{u+u^*}{2}$ and $B = \frac{u-u^*}{2i}$, where $*$ denotes the conjugate transpose.  The relation $uu^*=I$ guarantees that 
\begin{equation}\nonumber
\left(\begin{array}{cc}A & B \\-B & A\end{array}\right)
\end{equation}
satisfies the orthogonality relations.  Hence, $\phi$ is an isomorphism.
\end{proof}

Now, suppose $\{e_j\}_{j=1}^{2n}$ is an orthonormal basis for $V_\R$.  Then define 
\begin{subequations}
\begin{align}
f_j &= \frac{1}{2}(I+iJ)e_j = \frac{1}{2}(e_j - i e_{n+j}) 
\label{Witt basis 1}\\
f^\dagger_j &= -\frac{1}{2} (1 - iJ)e_j=-\frac{1}{2}(e_j + i e_{n+j})\label{Witt basis 2}
\end{align}
\end{subequations}
to obtain the Witt basis of the complexified vector space $V_\C:= V_\R \otimes_\R \C$ \cite{Bour}.

\subsection{Hermitian Dirac Operators and Representation Theory}
Consider a smooth function $u$ defined on a subset of $V_\R$ taking values in $\mathbb{S}$.  Then the gradient is a member of $V_\R \otimes \mathbb{S}$.  We claim that $V_\R \otimes \mathbb{S} = V_1 \oplus V_2$, where
\begin{eqnarray}
\nonumber V_1 &=& \left\{\sum_{i=1}^n (e_j\otimes e_j\sigma + e_{n+j}\otimes e_{n+j} \sigma)\ : \ \sigma \in \mathbb{S}\right\}\\
\nonumber V_2 &=& \left\{\sum_{i=1}^n (e_j \otimes \sigma_j + e_{n+j} \otimes \sigma_{n+j} \ : \ \sum_{j=1}^n (e_j \sigma_j + e_{n+j}\sigma_{n+j})=0, \ \sigma_1, \ldots, \sigma_{2n} \in \mathbb{S}\right\}.
\end{eqnarray}
Clearly we have $V_1 \cap V_2 = 0$. We now define $v \in V_\R \otimes \mathbb{S}$ and $w \in \mathbb{S}$ by taking
\begin{eqnarray}\nonumber
v &=& \sum_{j=1}^n(e_j\otimes \sigma_j + e_{n+j} \otimes\sigma_{n+j})\\
%\end{equation}
%we set 
%\begin{equation}\nonumber
w &=& \sum_{j =1}^n (e_j \sigma_j + e_{n+j} \sigma_{n+j}) \nonumber.
\end{eqnarray}
Then we may write $v=v_1+v_2$, where 
\begin{eqnarray}\nonumber 
v_1 &=&-\frac{1}{2n} \sum_{j=1}^n(e_j\otimes e_j w + e_{n+j} \otimes e_{n+j} w)\\
v_2 &=& \sum_{j=1}^n (e_j \otimes \sigma_j + e_{n+j} \otimes \sigma_{n+j}) + \frac{1}{2n}\sum_{j=1}^n(e_j \otimes e_jw + e_{n+j} \otimes e_{n+j} w). \nonumber
\end{eqnarray}

Denote by $\pi_i$ the projection onto $V_i$, $i=1,2$.  Note this decomposition works perfectly well for the vector space $V_\C \otimes \mathbb{S}$, where the adjustment is simply to tensor $V_1$ and $V_2$ with $\C$.  It is important to note the subspace $V_1$ is isomorphic with the spinor space $\mathbb{S}$. Moreover, one should note the analogy with the orthogonal case above.  This isomorphism gives us invariance of $V_1$ under the action of $Spin(2n)$ on $V_\R\otimes \mathbb{S}$, which is the same action as in the orthogonal case.  We will now prove invariance of $V_2$ under the spin group action.  

To this end, suppose $v_2\in V_2$.  Then $v_2 = \sum_{j=1}^n (e_j \otimes \sigma_j + e_{n+j} \otimes \sigma_{n+j})$, where $\sum_{j=1}^n (e_j \sigma_j + e_{n+j} \sigma_{n+j})=0$.  Now if $s\in Spin(2n)$, then we have 
\begin{eqnarray}\nonumber s\cdot v_2 = \sum_{j=1}^n (s\cdot e_j \otimes s\cdot \sigma_j +s\cdot e_{n+j} \otimes s\cdot \sigma_{n+j})
= \sum_{j=1}^n (se_js^{-1} \otimes s\sigma_j + se_{n+j} s^{-1} \otimes s \sigma_{n+j}). \nonumber
\end{eqnarray}
Note that, since $v_2 \in V_2$, we have

\begin{equation}\nonumber
\sum_{j=1}^n (se_j s^{-1}s \sigma_j + se_{n+j}s^{-1}s\sigma_{n+j}) = s \sum_{j=1}^n (e_j\sigma_j +e_{n+j}\sigma_{n+j}) = 0,
\end{equation}
proving invariance of $V_2$ under the action of $Spin(2n)$.  

We are now ready to introduce the two Hermitian Dirac operators, defined by 
\begin{eqnarray}\nonumber
V_\C\overset{\nabla}{\rightarrow} V_\C\otimes \mathbb{S} \overset{\pi^+}{\rightarrow} W^+ \otimes \mathbb{S} \overset{\pi_1} {\rightarrow} V_1 \overset{\cong}{\rightarrow} \mathbb{S}\\
\nonumber V_\C\overset{\nabla}{\rightarrow} V_\C\otimes \mathbb{S} \overset{\pi^-}{\rightarrow} W^- \otimes \mathbb{S} \overset{\pi_1} {\rightarrow} V_1 \overset{\cong}{\rightarrow} \mathbb{S}.
\end{eqnarray}

Explicitly, for a function $F$ defined on $V_\C$ taking values in $\mathbb{S}$, we have
\begin{eqnarray}\nonumber
\nabla F \overset{\pi^+}{\rightarrow}2\sum_{j=1}^n f_j \otimes \partial_{ \overline{z}_j}F \overset{\pi_1}{\rightarrow} 2 \sum_{j=1}^nf_j\partial_{\overline{z}_j} F
\\ \nonumber
\nabla F \overset{\pi^-}{\rightarrow}2\sum_{j=1}^n f^{\dagger}_j \otimes \partial_{z_j}F \overset{\pi_1}{\rightarrow} 2 \sum_{j=1}^nf^{\dagger}_j\partial_{z_j} F,
\end{eqnarray}
where we have rewritten the partial differential operators in terms of the complex planes $z_j$ and their conjugates $\overline{z}_j$:
\begin{eqnarray}
\nonumber \partial_{\overline{z}_j} = \frac{1}{2}(\partial_{x_j} + i \partial_{y_j})\\
\nonumber \partial_{z_j} = \frac{1}{2} (\partial_{x_j} - i \partial_{y_j}).
\end{eqnarray}
We remind the reader that the dagger symbol denotes Hermitian conjugation, as in Eqs. (\ref{Witt basis 1}) and (\ref{Witt basis 2}). Now writing the operators  explicitly, we obtain 
\begin{eqnarray}
\nonumber \partial_Z = \sum_{j=1}^n f^{\dagger}_j\partial_{z_j}\\
\nonumber \partial_{Z^\dagger} = \sum_{j=1}^n f_j \partial_{\overline{z}_j},
\end{eqnarray}
which we call the Hermitian Dirac operators.  Note since we have generated these operators by projecting the gradient onto invariant subspaces, we have that these Hermitian Dirac operators are invariant under an action of $U(n)$.

\subsection{Failure of Analogy Between Orthogonal and Hermitian Dirac Operators}
One may have noticed the construction of the Hermitian Dirac operators was strongly analogous to the construction of the orthogonal Dirac operator using representation theory.  This analogy is intentional, but it is important to note that it is incomplete.  First, the analogy breaks down in that there are two Hermitian Dirac operators, though brief consideration tells one this is no great problem, since the analogy with complex analysis tells us that there should be a holomorphic and anti-holomorphic differential operator arising out of a unitary structure.  Indeed, one may see this in representation-theoretic terms as the space $V_\C$ decomposing into the $\mp i$ eigenspaces $W^\pm$, which are invariant under the action of $U(n)$ and give inequivalent, conjugate representations of $U(n)$.  

Second, and this is the crucial problem, the analogy fails when considering the restriction of the spin representation from the spin group to the double cover of the unitary group. Specifically, the representation
\begin{equation}
\nonumber U(n)\rightarrow GL(\mathbb{S})
\end{equation}
is not irreducible.  (Strictly speaking, neither is the spin representation, since we are in even dimensions.  This however, presents less of a problem as we have two inequivalent, conjugate representations).  For instance, the program Lie \cite{Lie} shows for $n=4$ that this representation breaks into the irreducible subspaces
\begin{equation} \nonumber
\mathbb{S} = \Lambda^1 V_\C \oplus \Lambda^3 V_\C \oplus \Lambda^5 V_\C \oplus \Lambda^7 V_\C
\end{equation}
Even restricting to the invariant subspaces of the spin representation gives either the irreducible subspaces $\Lambda^1 V_\C \oplus \Lambda^3 V_\C $ or $\Lambda^5 V_\C \oplus \Lambda^7 V_\C$ for the unitary group.  Attempting to place a complex structure on the vector space in question and force the Clifford algebra structures to respect the complex structure does not work in such an elementary way.

One objection to our claims is that the kernels of these Hermitian Dirac operators are important for obtaining the Howe dual pair $\mathfrak{sl}(1|2) \times U(n)$ that is important in representation theory \cite{Howe-dual-pair}. In this sense, such operators could be considered natural from a representation theory perspective. This may be correct as far as it goes, but the operators so constructed cannot claim to be the Hermitian analogue to the Stein-Weiss operators or to the Atiyah-Singer construction fundamental to spin geometry, since they do not project onto irreducible representations of the unitary group.

There are several ways to proceed for a Hermitian Clifford analysis modeled after the Stein-Weiss and Atiyah-Singer constructions.  One way is to attempt to construct operators using not the complexification of the Clifford algebra but instead the Clifford algebra over a complex vector space with respect to a Hermitian inner product (as opposed to a complexified orthogonal inner product).  This requires investigating the representation theory of the Hermitian Clifford algebra and its action with the unitary group or special unitary group, which are the natural groups respecting the inner product structure.

Another way is to consider a Cauchy-Riemann structure rather than a Hermitian structure.  One problem encountered in studying the Hermitian Dirac operators is the group under which the operators are invariant is relatively small because $U(n)$ is compact.  The orthogonal Dirac operator, however, is invariant over the larger group $SO(n+1,1)$, which is exactly the group of conformal transformations on the sphere.  There exists no analogous group for complex structures, but there exists a similar group for CR structures.  In particular, the sphere with its natural CR structure has $SU(n,1)$ as the group of CR-automorphisms.  Attempting to define Dirac operators on odd-dimensional spheres seems a natural way to proceed.  Moreover, since CR structures are less "rigid" than complex structures, there is more "room" for Dirac operators arising from CR structures to have a larger group under which it is invariant.  This avenue of research has begun \cite{RCK}.  

\section{Conclusion}
This work first reviewed the connection between the orthogonal Dirac operator and representations of the spin group, by projecting the gradient onto the orthogonal complement of the Cartan product of the orthogonal representation of the spin group and the spin representation of the spin group.  This construction transfers to a Riemannian spin manifold; we consider this the natural construction, since it gives the Atiyah-Singer Dirac operator on a spin manifold.  We then constructed the Hermitian Dirac operators.  Though these operators appear to arise as projections of the gradient onto invariant subspaces analogous to the orthogonal Dirac operator, the representations of the unitary group in question are not irreducible, suggesting the spaces under consideration are too ``big".  Hence, we argue, the Hermitian Dirac operators do not represent the natural generalization of the Dirac operators to spaces with complex structure.  Instead, one may pursue Dirac operators arising from a Clifford algebra associated to a Hermitian form. Seeking analogy with the conformal invariance of the orthogonal Dirac operator, one may also explore whether Dirac-type operators can be defined on manifolds with a Cauchy-Riemann structure, the early stages of which are being pursued \cite{RCK}.  Both possibilities present interesting avenues that should be further explored in future research.

\section*{Funding} S.S. acknowledges this material is based upon work supported by the University of Arkansas Honors College Bodenhamer Fellowship and a SILO/SURF Grant from the Arkansas Department of Higher Education. S.S. presented an earlier version of this work at the 9th International Conference on Clifford Algebras and their Applications at the Bauhaus University in Weimar, Germany in 2011. R.W. acknowledges this material is based upon work supported by the National Science Foundation Graduate Research Fellowship Program under Grant No. DGE-0957325 and the University of Arkansas Graduate School Distinguished Doctoral Fellowship in Mathematics and Physics.

\section*{Acknowledgement(s)} We thank two anonymous referees for critical comments that led to a more balanced manuscript.

%%%%%%%%%%%%%%     Reference     %%%%%%%%%%%%

\end{document}